\def\algspacing{\alg@unmargin}
\newtheorem{thm}{Theorem}
\newtheorem{cor}[thm]{Corollary}
\newtheorem{lemma}[thm]{Lemma}
\theoremstyle{remark}
\numberwithin{equation}{section}
\numberwithin{thm}{section}
\DeclareMathAlphabet{\mathsfsl}{OT1}{cmss}{m}{sl}
\newcommand{\term}{\emph}
\renewcommand{\phi}{\varphi}
\newcommand{\eps}{\varepsilon}
\newcommand{\econst}{\mathrm{e}}
\newcommand{\onevct}{\mathbf{e}}
\newcommand{\Id}{\mathbf{I}}
\newcommand{\coll}[1]{\mathscr{#1}}
\newcommand{\abs}[1]{\left\vert {#1} \right\vert}
\newcommand{\diff}[1]{\mathrm{d}{#1}}
\newcommand{\idiff}[1]{\, \diff{#1}}
\newcommand{\Probe}[1]{\mathbb{P}\left({#1}\right)}
\newcommand{\Prob}[1]{\mathbb{P}\left\{ {#1} \right\}}
\newcommand{\Expect}{\operatorname{\mathbb{E}}}
\newcommand{\vct}[1]{\bm{#1}}
\newcommand{\mtx}[1]{\bm{#1}}
\newcommand{\adj}{*}
\newcommand{\trace}{\operatorname{tr}}
\newcommand{\psdle}{\preccurlyeq}
\newcommand{\norm}[1]{\left\Vert {#1} \right\Vert}
\newcommand{\enorm}[1]{\norm{#1}_2}
\newcommand{\enormsq}[1]{\enorm{#1}^2}
\begin{document}
\title[The Matrix Freedman Inequality]
{Freedman's Inequality for Matrix Martingales}

\author{Joel A.~Tropp}

\keywords{Discrete-time martingale, large deviation, probability inequality, random matrix, sum of independent random variables}

\thanks{2010 {\it Mathematics Subject Classification}.
Primary:
60B20. 
Secondary:
60F10, 
60G50, 
60G42
}

\thanks{JAT is with Computing \& Mathematical Sciences, MC 305-16, California Inst.~Technology, Pasadena, CA 91125.
E-mail: \url{jtropp@acm.caltech.edu}.
Research supported by ONR award N00014-08-1-0883, DARPA award N66001-08-1-2065, and AFOSR award FA9550-09-1-0643.}

\date{15 June 2010.  Revised 14 November 2010 and 15 January 2011.}

\begin{abstract}
Freedman's inequality is a martingale counterpart to Bernstein's inequality.  This result shows that the large-deviation behavior of a martingale is controlled by the predictable quadratic variation and a uniform upper bound for the martingale difference sequence.  Oliveira has recently established a natural extension of Freedman's inequality that provides tail bounds for the maximum singular value of a matrix-valued martingale.  This note describes a different proof of the matrix Freedman inequality that depends on a deep theorem of Lieb from matrix analysis.  This argument delivers sharp constants in the matrix Freedman inequality, and it also yields tail bounds for other types of matrix martingales.  The new techniques are adapted from recent work~\cite{Tro10:User-Friendly-arxiv} by the present author.
\end{abstract}



\maketitle

\section{An Introduction to Freedman's Inequality} \label{sec:freedman}

The Freedman inequality~\cite[Thm.~(1.6)]{Fre75:Tail-Probabilities}
is a martingale extension of the Bernstein inequality.  This result demonstrates that a martingale exhibits normal-type concentration near its mean value on a scale determined by the predictable quadratic variation, and the upper tail has Poisson-type decay on a scale determined by a uniform bound on the difference sequence.

Oliveira~\cite[Thm.~1.2]{Oli10:Concentration-Adjacency} proves that Freedman's inequality extends, in a certain form, to the matrix setting.  The purpose of this note is to demonstrate that the methods from the author's paper~\cite{Tro10:User-Friendly-arxiv} can be used to establish a sharper version of the matrix Freedman inequality.  Furthermore, this approach offers a transparent way to obtain other probability inequalities for adapted sequences.

Let us introduce some notation and background on martingales so that we can state Freedman's original result rigorously.  Afterward, we continue with a statement of our main results and a presentation of the methods that we need to prove the matrix generalization.

\subsection{Martingales}

Let $(\Omega, \coll{F}, \mathbb{P})$ be a probability space, and let $\coll{F}_0 \subset \coll{F}_1 \subset \coll{F}_2 \subset \dots \subset \coll{F}$ be a filtration of the master sigma algebra.  We write $\Expect_k$ for the expectation conditioned on $\coll{F}_k$.  A \term{martingale} is a (real-valued) random process $\{ Y_k : k = 0, 1, 2, \dots \}$ that is adapted to the filtration and that satisfies two properties:
$$
\Expect_{k-1} Y_k = Y_{k-1}
\quad\text{and}\quad
\Expect \abs{Y_k} < + \infty
\quad\text{for $k = 1, 2, 3, \dots$.}
$$
For simplicity, we assume that the initial value of a martingale is null: $Y_0 = 0$.  The \term{difference sequence} is the random process defined by
$$
X_k = Y_k - Y_{k-1}
\quad\text{for $k = 1, 2, 3, \dots$.}
$$
Roughly, the present value of a martingale depends only on the past values, and the martingale has the status quo property: today, on average, is the same as yesterday.

\subsection{Freedman's Inequality}

Freedman uses a powerful stopping-time argument to establish the following theorem for scalar martingales~\cite[Thm.~(1.6)]{Fre75:Tail-Probabilities}.

\begin{thm}[Freedman] \label{thm:freedman}
Consider a real-valued martingale $\{ Y_k : k = 0, 1, 2, \dots \}$ with difference sequence $\{ X_k : k = 1, 2, 3, \dots \}$.  Assume that the difference sequence is uniformly bounded:
$$
X_k \leq R
\quad\text{almost surely}
\quad\text{for $k = 1, 2, 3, \dots$.}
$$
Define the predictable quadratic variation process of the martingale:
$$
W_k := \sum\nolimits_{j=1}^k \Expect_{j-1} \big(X_j^2\big)
\quad\text{for $k = 1, 2, 3, \dots$.}
$$
Then, for all $t \geq 0$ and $\sigma^2 > 0$,
$$
\Prob{ \exists k \geq 0 : Y_k \geq t \ \text{ and }\ 
	W_k \leq \sigma^2 }
	\leq \exp \left\{ - \frac{ -t^2/2 }{\sigma^2 + Rt/3} \right\}.
$$
\end{thm}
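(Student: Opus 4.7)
The plan is to prove Freedman's inequality via the classical Laplace-transform (Chernoff) method, augmented by a stopping-time device that converts a maximal-type inequality into a one-time deviation bound. I fix a parameter $\theta > 0$ (to be optimized later), build an exponential supermartingale adapted to $\{\coll{F}_k\}$, and then apply Markov's inequality at a carefully chosen stopping time.

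The first key step is a Bernstein-type cumulant bound: for $0 < \theta < 3/R$, set $g(\theta) := \theta^2/(2(1-\theta R/3))$. I claim that for any random variable $X$ with $\Expect X = 0$ and $X \leq R$ almost surely, one has $\Expect \exp(\theta X) \leq \exp( g(\theta) \, \Expect X^2 )$. This follows from the elementary scalar inequality $\mathrm{e}^{\theta x} \leq 1 + \theta x + (\mathrm{e}^{\theta R} - 1 - \theta R) x^2 / R^2$ valid for $x \leq R$ (by convexity of $\mathrm{e}^{\theta x}$ in the variable $x^2$ on $(-\infty, R]$), followed by the analytic estimate $\mathrm{e}^u - 1 - u \leq u^2/(2(1-u/3))$ for $u < 3$. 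Applied conditionally to the martingale difference $X_k$ given $\coll{F}_{k-1}$, this yields $\Expect_{k-1} \exp(\theta X_k) \leq \exp( g(\theta) \, \Expect_{k-1}(X_k^2))$. Consequently, the process $M_k := \exp\bigl( \theta Y_k - g(\theta) W_k \bigr)$, with $M_0 = 1$, is a nonnegative supermartingale, because telescoping gives $\Expect_{k-1} M_k = M_{k-1} \exp(-g(\theta) \Expect_{k-1} X_k^2) \Expect_{k-1} \exp(\theta X_k) \leq M_{k-1}$.

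The second step is the stopping argument. Define $\tau := \inf\{ k \geq 0 : Y_k \geq t \text{ and } W_k \leq \sigma^2 \}$, with $\inf \emptyset = +\infty$; the event in the theorem is precisely $\{\tau < \infty\}$. Applying optional stopping to the bounded stopping time $\tau \wedge n$ gives $\Expect M_{\tau \wedge n} \leq 1$. On the event $\{\tau \leq n\}$, the definition of $\tau$ forces $Y_\tau \geq t$ and $W_\tau \leq \sigma^2$, so $M_{\tau \wedge n} \mathbf{1}_{\{\tau \leq n\}} \geq \exp(\theta t - g(\theta)\sigma^2) \, \mathbf{1}_{\{\tau \leq n\}}$. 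Taking expectations, dividing through, and letting $n \to \infty$ via monotone convergence yields $\Prob{\tau < \infty} \leq \exp(-\theta t + g(\theta)\sigma^2)$.

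Finally, I would optimize over $0 < \theta < 3/R$. Setting $\theta = t/(\sigma^2 + Rt/3)$ (which lies in the valid range) is a standard Bernstein-type calculation that delivers the stated bound $\exp\bigl( -(t^2/2)/(\sigma^2 + Rt/3) \bigr)$. The main obstacle is the conditional moment generating function estimate in step one, since this is where the boundedness hypothesis $X_k \leq R$ interacts with the quadratic variation; everything downstream is essentially bookkeeping around the exponential supermartingale and the stopping time.
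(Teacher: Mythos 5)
Your proof is correct, and it is essentially Freedman's own stopping-time/exponential-supermartingale argument — the same strategy the paper follows for its matrix generalization. Note, however, that the paper does not actually supply a proof of this scalar statement: it is cited directly from Freedman (1975), so there is no in-paper proof to compare against line by line. What there is to compare is the paper's proof of the matrix analogue (Lemma 2.1, Theorem 2.3, Lemma 3.2, Theorem 3.1): there, the author proves a Bennett-type bound first, using the mgf estimate $\Expect_{k-1}\econst^{\theta\mtx{X}_k} \psdle \exp((\econst^\theta - \theta - 1)\Expect_{k-1}\mtx{X}_k^2)$, obtains a tail bound with rate $\frac{\sigma^2}{R^2}h(Rt/\sigma^2)$, and only at the very end passes to the Freedman form via the numerical inequality $h(u)\geq \frac{u^2/2}{1+u/3}$. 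You instead absorb that numerical step into the mgf estimate itself by taking $g(\theta) = \theta^2/(2(1-\theta R/3))$ directly. Both are valid; going through Bennett gives a slightly stronger intermediate statement, while your route is the shortest path to the stated Bernstein-form bound and simplifies the optimization over $\theta$.

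Two small remarks. First, the parenthetical justification you give for the elementary inequality $\econst^{\theta x}\leq 1 + \theta x + \frac{\econst^{\theta R}-1-\theta R}{R^2}x^2$ — ``convexity of $\econst^{\theta x}$ in the variable $x^2$'' — is not quite a meaningful statement (the map $x\mapsto x^2$ is not injective on $(-\infty,R]$); the correct one-line reason is that $u\mapsto(\econst^u-1-u)/u^2$ is nondecreasing on $\mathbb{R}$, so $\theta x\leq\theta R$ implies $(\econst^{\theta x}-1-\theta x)/(\theta x)^2\leq(\econst^{\theta R}-1-\theta R)/(\theta R)^2$. The inequality you state is the standard, correct one; only the heuristic justification should be reworded. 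Second, the bound you obtain, $\exp(-t^2/(2(\sigma^2 + Rt/3)))$, is what the theorem intends — the displayed bound in the paper contains an obvious sign typo ($-(-t^2/2)$).
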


When the difference sequence $\{ X_k \}$ consists of independent random variables, the predictable quadratic variation is no longer random.  In this case, Freedman's inequality reduces to the usual Bernstein inequality~\cite[Thm.~6]{Lug09:Concentration-Measure}.

\subsection{Matrix Martingales}

Matrix martingales are defined in much the same manner as scalar martingales.  Consider a random process $\{ \mtx{Y}_k : k = 0, 1, 2, \dots \}$ whose values are matrices of finite dimension.  We say that the process is a \term{matrix martingale} when
$$
\Expect_{k-1} \mtx{Y}_k = \mtx{Y}_{k-1}
\quad\text{and}\quad
\Expect \norm{\mtx{Y}_k} < + \infty
\quad\text{for $k = 1, 2, 3, \dots$.}
$$
We write $\norm{\cdot}$ for the \term{spectral norm}, which coincides with the operator norm between Hilbert spaces.  As before, we assume that $\mtx{Y}_0 = \mtx{0}$, and we define the difference sequence $\{ \mtx{X}_k : k = 1, 2, 3, \dots \}$ via the relation
$$
\mtx{X}_k = \mtx{Y}_k - \mtx{Y}_{k-1}
\quad\text{for $k = 1, 2, 3, \dots$.}
$$
A matrix-valued random process is a martingale if and only if we obtain a scalar martingale when we track each fixed coordinate in time.

\subsection{Freedman's Inequality for Matrices}

In the elegant paper~\cite{Oli10:Concentration-Adjacency}, Oliveira establishes that it is possible to extend Freedman's inequality to the matrix setting.  He studies martingales that take self-adjoint matrix values, and he shows that the \emph{maximum eigenvalue} of the martingale satisfies a result very similar to Freedman's inequality.  The uniform bound $R$ and the predictable quadratic variation $\{W_k\}$ are replaced by natural noncommutative extensions.  As a consequence, these results have powerful applications in random matrix theory.

In this note, we establish a sharper version of Oliveira's theorem~\cite[Thm.~1.2]{Oli10:Concentration-Adjacency}.

\begin{thm}[Matrix Freedman] \label{thm:matrix-freedman}
Consider a matrix martingale $\{ \mtx{Y}_k : k = 0, 1, 2, \dots \}$ whose values are self-adjoint matrices with dimension $d$, and let $\{ \mtx{X}_k : k = 1, 2, 3, \dots \}$ be the difference sequence.  Assume that the difference sequence is uniformly bounded in the sense that
$$
\lambda_{\max}( \mtx{X}_k ) \leq R
\quad\text{almost surely}
\quad\text{for $k = 1, 2, 3, \dots$}.
$$
Define the predictable quadratic variation process of the martingale:
$$
\mtx{W}_k := \sum\nolimits_{j=1}^k \Expect_{j-1} \big(\mtx{X}_j^2\big).
\quad\text{for $k = 1, 2, 3, \dots$}.
$$
Then, for all $t \geq 0$ and $\sigma^2 > 0$,
$$
\Prob{ \exists k \geq 0 : \lambda_{\max}(\mtx{Y}_k) \geq t \ \text{ and }\ 
	\norm{\mtx{W}_k} \leq \sigma^2 }
	\leq d \cdot \exp \left\{ - \frac{ -t^2/2 }{\sigma^2 + Rt/3} \right\}.
$$
\end{thm}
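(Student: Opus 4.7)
The plan is to adapt the exponential-mean method from~\cite{Tro10:User-Friendly-arxiv}, which uses Lieb's concavity theorem to control the matrix moment generating function, and to combine it with a stopping-time argument to handle the adapted setting. Fix $\theta \in [0, 3/R)$ and set $g(\theta) \defby (\theta^2/2)/(1 - R\theta/3)$. Introduce the nonnegative random process
$$
S_k(\theta) \defby \trace \exp\!\bigl( \theta \mtx{Y}_k - g(\theta) \mtx{W}_k \bigr), \qquad k = 0, 1, 2, \dots,
$$
so that $S_0(\theta) = d$. The central claim is that $\{ S_k(\theta) \}_{k \geq 0}$ is a supermartingale with respect to $\{ \coll{F}_k \}$; granted this, a stopping-time argument and Markov's inequality convert it into a uniform-in-$k$ tail bound.

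To verify the supermartingale property, I would decompose $\theta \mtx{Y}_k - g(\theta) \mtx{W}_k = \mtx{H}_{k-1} + \theta \mtx{X}_k$, where $\mtx{H}_{k-1} \defby \theta \mtx{Y}_{k-1} - g(\theta) \mtx{W}_{k-1} - g(\theta) \Expect_{k-1} \mtx{X}_k^2$ is $\coll{F}_{k-1}$-measurable and self-adjoint. Applying Lieb's concavity theorem (combined with conditional Jensen) on $\coll{F}_{k-1}$ yields
$$
\Expect_{k-1} \trace \exp\!\bigl( \mtx{H}_{k-1} + \theta \mtx{X}_k \bigr) \leq \trace \exp\!\bigl( \mtx{H}_{k-1} + \log \Expect_{k-1} e^{\theta \mtx{X}_k} \bigr).
$$
The remaining ingredient is a semidefinite Bernstein-type bound on the conditional matrix MGF: the scalar inequality $e^{\theta x} \leq 1 + \theta x + g(\theta) x^2$, valid for $x \leq R$ and $0 \leq \theta < 3/R$ and obtained from the Taylor series together with $k! \geq 2 \cdot 3^{k-2}$, lifts by the transfer rule to $e^{\theta \mtx{X}_k} \psdle \Id + \theta \mtx{X}_k + g(\theta) \mtx{X}_k^2$. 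Taking conditional expectation (using $\Expect_{k-1} \mtx{X}_k = \mtx{0}$), then invoking $\Id + \mtx{A} \psdle e^{\mtx{A}}$ for $\mtx{A} \psdge \mtx{0}$ and operator monotonicity of the matrix logarithm, produces $\log \Expect_{k-1} e^{\theta \mtx{X}_k} \psdle g(\theta) \Expect_{k-1} \mtx{X}_k^2$. Substituting this and using monotonicity of $\mtx{A} \mapsto \trace e^{\mtx{A}}$ under the PSD order collapses the right-hand side to $S_{k-1}(\theta)$, establishing $\Expect_{k-1} S_k(\theta) \leq S_{k-1}(\theta)$.

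For the uniform tail bound, introduce the stopping time $\tau \defby \inf\{ k \geq 0 : \lambda_{\max}(\mtx{Y}_k) \geq t \text{ and } \norm{\mtx{W}_k} \leq \sigma^2 \}$, with $\inf \emptyset = \infty$. Optional stopping combined with Fatou applied to the nonnegative supermartingale $S_{k \wedge \tau}(\theta)$ gives $\Expect[ S_\tau(\theta) \mathbf{1}\{\tau < \infty\} ] \leq d$. On $\{\tau < \infty\}$, the inequalities $\norm{\mtx{W}_\tau} \leq \sigma^2$ and $\lambda_{\max}(\mtx{Y}_\tau) \geq t$ imply $g(\theta) \mtx{W}_\tau \psdle g(\theta) \sigma^2 \Id$ and hence $S_\tau(\theta) \geq \exp(\theta t - g(\theta) \sigma^2)$ pointwise. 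Markov's inequality then yields $\Prob{ \tau < \infty } \leq d \exp( g(\theta)\sigma^2 - \theta t )$, and the standard Bernstein optimization over $\theta \in [0, 3/R)$ produces the stated bound with the dimensional prefactor $d$ inherited from $S_0(\theta)$.

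The principal obstacle is establishing the supermartingale property of $S_k(\theta)$: in the matrix setting there is no product-form exponential martingale, since $\mtx{X}_k$ and $\mtx{Y}_{k-1}$ generically fail to commute and $\exp(\mtx{A} + \mtx{B}) \neq \exp(\mtx{A})\exp(\mtx{B})$. Lieb's concavity theorem is precisely what permits the additive decomposition inside trace-exp, and together with the transfer of the scalar Bernstein inequality it is the only nontrivial analytic ingredient in the argument; once these are in hand the stopping-time argument and the scalar Bernstein optimization are routine.
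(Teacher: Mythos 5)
Your proof is correct and follows essentially the same path as the paper: the supermartingale $S_k(\theta) = \trace\exp(\theta\mtx{Y}_k - g(\theta)\mtx{W}_k)$, Lieb's concavity theorem (via the paper's Corollary~\ref{cor:cum-ineq}) to establish the supermartingale property, a stopping-time argument, and optimization over $\theta$. The one genuine difference is in the conditional mgf bound. You use the Bernstein-style polynomial estimate $\econst^{\theta x} \leq 1 + \theta x + g(\theta) x^2$ with $g(\theta) = (\theta^2/2)/(1 - R\theta/3)$, transfer it to matrices, and optimize directly to obtain the Freedman form. The paper instead uses the Bennett-style estimate $\Expect_{k-1}\econst^{\theta\mtx{X}_k} \psdle \exp\bigl((\econst^{\theta} - \theta - 1)\cdot \Expect_{k-1}\mtx{X}_k^2\bigr)$ (its Lemma~\ref{lem:bernstein-bdd-mgf}), optimizes to obtain the sharper Bennett-type Theorem~\ref{thm:adapted-bennett} with exponent $-(\sigma^2/R^2)\,h(Rt/\sigma^2)$, and then derives the stated Freedman bound via the numerical comparison $h(u) \geq (u^2/2)/(1+u/3)$. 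Your route is more direct for the stated theorem; the paper's route costs one extra step but yields the strictly stronger Bennett bound as an intermediate. One small caveat: your justification of the scalar inequality via ``Taylor series plus $k! \geq 2\cdot 3^{k-2}$'' handles $0 \leq x \leq R$ cleanly, but for $x<0$ you should invoke the standard fact that $y \mapsto (\econst^y - 1 - y)/y^2$ is nondecreasing on $\mathbb{R}$ before bounding by the value at $y = \theta R$; this is exactly how one arrives at the Bennett coefficient $(\econst^{\theta R} - \theta R - 1)/R^2$, which is then dominated by your $g(\theta)$.
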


Here and elsewhere, $\lambda_{\max}$ denotes the algebraically largest eigenvalue of a self-adjoint matrix, and $\norm{\cdot}$ denotes the spectral norm, which returns the largest singular value of a matrix.

Theorem~\ref{thm:matrix-freedman} offers several concrete improvements over Oliveira's original work.  His theorem~\cite[Thm.~1.2]{Oli10:Concentration-Adjacency} requires a stronger uniform bound of the form $\norm{ \mtx{X}_k } \leq R$, and the constants in his inequality are somewhat larger (but still very reasonable).

We prove Theorem~\ref{thm:matrix-freedman} in Section~\ref{sec:freedman} as a consequence of a stronger probability inequality that follows from a general result for adapted sequences of matrices.  These tail bounds cannot be sharpened without changing their structure; see~\cite[\S4 and \S6]{Tro10:User-Friendly-arxiv} for a more detailed discussion.

As an immediate corollary of Theorem~\ref{thm:matrix-freedman}, we obtain a result for rectangular matrices.

\begin{cor}[Rectangular Matrix Freedman]
Consider a matrix martingale $\{ \mtx{Y}_k : k = 0, 1, 2, \dots \}$ whose values are matrices with dimension $d_1 \times d_2$, and let $\{ \mtx{X}_k : k = 1, 2, 3, \dots \}$ be the difference sequence.  Assume that the difference sequence is uniformly bounded:
$$
\norm{ \mtx{X}_k } \leq R
\quad\text{almost surely}
\quad\text{for $k = 1, 2, 3, \dots$}.
$$
Define two predictable quadratic variation processes for this martingale:
\begin{align*}
\mtx{W}_{{\rm col}, \, k} &:= \sum\nolimits_{j=1}^k \Expect_{j-1} \big(\mtx{X}_j \mtx{X}_j^\adj \big) \quad\text{and} \\
\mtx{W}_{{\rm row}, \, k} &:= \sum\nolimits_{j=1}^k \Expect_{j-1} \big(\mtx{X}_j^\adj \mtx{X}_j \big)
\quad\text{for $k = 1, 2, 3, \dots$}.
\end{align*}
Then, for all $t \geq 0$ and $\sigma^2 > 0$,
$$
\Prob{ \exists k \geq 0 : \norm{ \mtx{Y}_k } \geq t \ \text{ and }\ 
	\max\{ \norm{ \mtx{W}_{{\rm col}, \, k} }, \norm{\mtx{W}_{{\rm row}, \, k}} \} \leq \sigma^2 }
	\leq (d_1 + d_2) \cdot \exp \left\{ - \frac{ -t^2/2 }{\sigma^2 + Rt/3} \right\}.
$$
\end{cor}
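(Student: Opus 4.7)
The plan is to reduce the rectangular case to the self-adjoint case of Theorem~\ref{thm:matrix-freedman} via the Hermitian dilation trick. For a $d_1 \times d_2$ matrix $\mtx{B}$, define the self-adjoint dilation
$$
\coll{H}(\mtx{B}) := \begin{pmatrix} \mtx{0} & \mtx{B} \\ \mtx{B}^\adj & \mtx{0} \end{pmatrix}
\in \Sspace{d_1+d_2},
$$
which is a linear map. Its defining features, which I would quickly recall, are that the nonzero eigenvalues of $\coll{H}(\mtx{B})$ are the signed singular values $\pm \sigma_i(\mtx{B})$, so $\lambda_{\max}(\coll{H}(\mtx{B})) = \norm{\mtx{B}}$, and its square is block-diagonal:
$$
\coll{H}(\mtx{B})^2 = \begin{pmatrix} \mtx{B}\mtx{B}^\adj & \mtx{0} \\ \mtx{0} & \mtx{B}^\adj \mtx{B} \end{pmatrix}.
$$

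The main step is to apply Theorem~\ref{thm:matrix-freedman} to the dilated process $\mtx{Z}_k := \coll{H}(\mtx{Y}_k)$. First I would verify that $\{\mtx{Z}_k\}$ is a self-adjoint matrix martingale of dimension $d_1+d_2$: linearity of $\coll{H}$ combined with linearity of conditional expectation gives $\Expect_{k-1}\mtx{Z}_k = \coll{H}(\Expect_{k-1}\mtx{Y}_k) = \coll{H}(\mtx{Y}_{k-1}) = \mtx{Z}_{k-1}$, and the integrability follows since the spectral norm of $\coll{H}(\mtx{B})$ equals $\norm{\mtx{B}}$. The difference sequence of $\{\mtx{Z}_k\}$ is $\coll{H}(\mtx{X}_k)$, and the uniform bound hypothesis transfers cleanly:
$$
\lambda_{\max}(\coll{H}(\mtx{X}_k)) = \norm{\mtx{X}_k} \leq R \quad\text{almost surely.}
$$

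Next I would compute the predictable quadratic variation of $\{\mtx{Z}_k\}$. By the block-diagonal identity above and linearity of $\Expect_{j-1}$,
$$
\sum_{j=1}^{k} \Expect_{j-1}\bigl( \coll{H}(\mtx{X}_j)^2 \bigr)
= \begin{pmatrix} \mtx{W}_{{\rm col},\, k} & \mtx{0} \\ \mtx{0} & \mtx{W}_{{\rm row},\, k} \end{pmatrix}.
$$
The spectral norm of a block-diagonal self-adjoint matrix is the maximum of the spectral norms of its blocks, so the norm of this predictable quadratic variation equals $\max\{\norm{\mtx{W}_{{\rm col},\, k}}, \norm{\mtx{W}_{{\rm row},\, k}}\}$.

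Finally I would translate the events. Since $\lambda_{\max}(\mtx{Z}_k) = \norm{\mtx{Y}_k}$, the event $\{\norm{\mtx{Y}_k} \geq t\ \text{and}\ \max\{\norm{\mtx{W}_{{\rm col},\, k}}, \norm{\mtx{W}_{{\rm row},\, k}}\} \leq \sigma^2\}$ coincides with the corresponding event for $\mtx{Z}_k$ and its predictable quadratic variation. Applying Theorem~\ref{thm:matrix-freedman} to $\{\mtx{Z}_k\}$ with dimension $d = d_1+d_2$ yields exactly the claimed bound. There is no real obstacle in this argument; the only place to be careful is the verification that $\coll{H}$ respects the martingale and conditional-expectation structure, but this is immediate from linearity.
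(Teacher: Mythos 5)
Your argument is correct and is exactly the Hermitian dilation reduction the paper sketches: the paper defines $\mtx{Z}_k$ as the self-adjoint dilation of $\mtx{Y}_k$ and applies Theorem~\ref{thm:matrix-freedman} to it. You have simply filled in the routine verifications (martingale property, transfer of the bound on the difference sequence, block-diagonal form of the quadratic variation) that the paper leaves as a sketch.
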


\begin{proof}[Proof Sketch]
Define a self-adjoint matrix martingale $\{\mtx{Z}_k\}$ with dimension $d = d_1 + d_2$ via
$$
\mtx{Z}_k = \begin{bmatrix} \mtx{0} & \mtx{Y}_k \\ \mtx{Y}_k^\adj & \mtx{0} \end{bmatrix}.
$$
Apply Theorem~\ref{thm:matrix-freedman} to this martingale.  See~\cite[\S2.6 and \S4.2]{Tro10:User-Friendly-arxiv} for some additional details about this type of argument.
\end{proof}

\subsection{Tools and Techniques}

In his paper~\cite{Oli10:Concentration-Adjacency}, Oliveira describes a way to transport Freedman's stopping-time argument to the matrix setting.  The main technical obstacle is to control the evolution of the moment generating function (mgf) of the matrix martingale.  Oliveira accomplishes this task using an insightful variation on a idea due to Ahlswede and Winter~\cite[App.]{AW02:Strong-Converse}.  This method, however, does not result in the sharpest bounds on the matrix mgf.

This note demonstrates that the ideas from~\cite{Tro10:User-Friendly-arxiv} allow us to obtain the sharp estimates for the mgf with minimal effort.  Our main tool is a deep theorem~\cite[Thm.~6]{Lie73:Convex-Trace} of Lieb.

\begin{thm}[Lieb, 1973] \label{thm:lieb}
Fix a self-adjoint~matrix $\mtx{H}$.  The function
$$
\mtx{A} \longmapsto \trace \exp( \mtx{H} + \log(\mtx{A}))
$$
is concave on the positive-definite~cone.
\end{thm}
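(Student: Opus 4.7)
My plan is to prove Theorem~\ref{thm:lieb} by combining the Lie--Trotter product formula with a multilinear joint concavity statement, and then passing to a pointwise limit. First, I would use Lie--Trotter to write
$$
\trace \exp( \mtx{H} + \log \mtx{A} ) = \lim_{n \to \infty} \trace \bigl[ \bigl( \exp(\mtx{H}/n) \, \mtx{A}^{1/n} \bigr)^n \bigr].
$$
Since a pointwise limit of concave functions on a convex open set is concave, it suffices to prove that each finite-$n$ approximant $\mtx{A} \mapsto \trace \bigl[ (\mtx{M}_n \mtx{A}^{1/n})^n \bigr]$, where $\mtx{M}_n = \exp(\mtx{H}/n)$, is concave on the positive-definite cone.

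To handle the $n$th approximant, I would decouple the $n$ copies of $\mtx{A}$: introduce auxiliary positive-definite variables $\mtx{A}_1, \dots, \mtx{A}_n$ and consider the multilinear trace
$$
(\mtx{A}_1, \dots, \mtx{A}_n) \longmapsto \trace \bigl[ \mtx{M}_n \mtx{A}_1^{1/n} \mtx{M}_n \mtx{A}_2^{1/n} \cdots \mtx{M}_n \mtx{A}_n^{1/n} \bigr].
$$
If this function is jointly concave, then restricting to the diagonal $\mtx{A}_1 = \cdots = \mtx{A}_n = \mtx{A}$ — a linear, hence concavity-preserving, restriction — yields the desired concavity in $\mtx{A}$. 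The joint concavity itself I would obtain by induction on $n$, using cyclicity of the trace to isolate two adjacent factors at each step and reducing to the bilinear Lieb joint-concavity theorem: for every matrix $\mtx{K}$ and every $s \in [0,1]$, the map $(\mtx{A}, \mtx{B}) \mapsto \trace( \mtx{K}^{\adj} \mtx{A}^s \mtx{K} \mtx{B}^{1-s})$ is jointly concave on pairs of positive-definite matrices.

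The principal obstacle is this bilinear joint concavity, which is itself a deep classical result. To establish it I would follow Ando's operator-theoretic route: show that $t \mapsto t^{-1}$ is operator convex on $(0, \infty)$, use the integral representation
$$
x^s = \frac{\sin(\pi s)}{\pi} \int_0^\infty \lambda^{s-1} \frac{x}{x + \lambda} \idiff{\lambda} \qquad (0 < s < 1)
$$
to express fractional powers as weighted averages of operations whose concavity properties descend from the operator convexity of the inverse, and realize the trace expression as a positive linear functional of $\mtx{A}^s \otimes (\mtx{B}^{\transp})^{1-s}$ via vectorization of $\mtx{K}$. An alternative I would keep in reserve is Epstein's complex-interpolation argument, which applies the Hadamard three-lines theorem to a matrix-valued analytic family on the strip $0 \leq \real z \leq 1$. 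Either route is nontrivial and forms the bulk of the argument; once the bilinear statement is in hand, the Lie--Trotter reduction and the limit pass-through above are essentially routine.
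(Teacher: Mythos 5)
The paper does not prove this theorem---it cites Lieb's 1973 result directly and refers to~\cite[\S3.3]{Tro10:User-Friendly-arxiv} and~\cite{Tro10:Joint-Convexity} for discussion---so your proposal has to stand on its own. The outer framing is fine: the Lie--Trotter formula gives $\trace\exp(\mtx{H}+\log\mtx{A}) = \lim_{n\to\infty}\trace\bigl[(\exp(\mtx{H}/n)\,\mtx{A}^{1/n})^n\bigr]$, and a pointwise limit of concave functions on an open convex set is concave. The gap is the middle step, the multilinear decoupling.

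Writing $\mtx{M} = \exp(\mtx{H}/n)$, the map
$$
(\mtx{A}_1,\dots,\mtx{A}_n)\longmapsto \trace\bigl[\mtx{M}\mtx{A}_1^{1/n}\mtx{M}\mtx{A}_2^{1/n}\cdots\mtx{M}\mtx{A}_n^{1/n}\bigr]
$$
is not even real-valued off the diagonal: its complex conjugate equals the same trace with the arguments listed in reverse order, so ``joint concavity'' is ill-posed as stated. More seriously, the proposed induction does not land on the bilinear Lieb form. Freezing all but two adjacent variables and cycling the trace gives $\trace[\mtx{N}\mtx{A}_i^{1/n}\mtx{M}\mtx{A}_{i+1}^{1/n}]$, where $\mtx{N}$ is $\mtx{M}$ times the product of the frozen factors; this matches $\trace[\mtx{K}^\adj\mtx{A}^{s}\mtx{K}\mtx{B}^{t}]$ only when $\mtx{N}=\mtx{M}^\adj$, i.e.\ only when the frozen product is the identity. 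So the concavity of the finite-$n$ approximant is not an easy corollary of the bilinear theorem, and the standard proofs of Lieb's Theorem~6 take other routes: Lieb's own triple-matrix argument, Epstein's complex interpolation applied directly to $\mtx{A}\mapsto\trace\exp(\mtx{H}+\log\mtx{A})$, or---as in the companion paper~\cite{Tro10:Joint-Convexity} cited in this note---the Gibbs variational formula
$$
\trace\exp(\mtx{H}+\log\mtx{A}) = \max_{\mtx{T}\psdgt\zeromtx}\bigl[\trace(\mtx{T}\mtx{H}) + \trace\mtx{T} - \trace(\mtx{T}\log\mtx{T} - \mtx{T}\log\mtx{A})\bigr],
$$
combined with Lindblad's joint convexity of the quantum relative entropy (itself a consequence of the bilinear Lieb theorem) and the fact that partial maximization preserves joint concavity. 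Since you already intend to prove the bilinear theorem, that last route is the shortest and avoids the multilinear obstruction entirely.
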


\noindent
See~\cite[\S3.3]{Tro10:User-Friendly-arxiv} and~\cite{Tro10:Joint-Convexity} for some additional discussion of this result.  We apply Theorem~\ref{thm:lieb} through the following simple corollary~\cite[Cor. 3.2]{Tro10:User-Friendly-arxiv}.  We include a proof for completeness.


\begin{cor}[Tropp, 2010] 
\label{cor:cum-ineq}
Let $\mtx{H}$ be a fixed self-adjoint matrix, and let $\mtx{X}$ be a random self-adjoint matrix.  Then
$$
\Expect \trace \exp( \mtx{H} + \mtx{X} )
	\leq \trace \exp( \mtx{H} + \log( \Expect \econst^{\mtx{X}} ) ).
$$
\end{cor}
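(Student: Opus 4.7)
The plan is to apply Lieb's concavity theorem in tandem with Jensen's inequality. Specifically, since $\mtx{X}$ is self-adjoint almost surely, the random matrix $\mtx{A} := \econst^{\mtx{X}}$ takes values in the positive-definite cone. Moreover, because exp and log are inverse on the positive-definite cone, we have the identity $\log(\econst^{\mtx{X}}) = \mtx{X}$, so the left-hand side of the desired inequality can be rewritten as
$$
\Expect \trace \exp(\mtx{H} + \mtx{X}) = \Expect \trace \exp(\mtx{H} + \log \mtx{A}).
$$

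Next, I invoke Theorem~\ref{thm:lieb} with the fixed self-adjoint matrix $\mtx{H}$ to conclude that the real-valued function $f(\mtx{A}) := \trace \exp(\mtx{H} + \log \mtx{A})$ is concave on the positive-definite cone. By Jensen's inequality for concave functions of a random matrix argument (here applied to the scalar-valued $f$ evaluated at the random positive-definite matrix $\mtx{A}$),
$$
\Expect f(\mtx{A}) \leq f(\Expect \mtx{A}) = \trace \exp\bigl( \mtx{H} + \log( \Expect \econst^{\mtx{X}} ) \bigr).
$$
Combining the two displays yields the claimed bound.

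The main conceptual step is the observation that Lieb's theorem converts an inequality about $\Expect \econst^{\mtx{H}+\mtx{X}}$, which is hard to control directly because $\mtx{H}$ and $\mtx{X}$ need not commute, into an inequality about $\Expect \econst^{\mtx{X}}$, for which the expectation passes inside the exponential after a logarithmic reparameterization. Once one recognizes that $\log \circ \exp$ is the identity on self-adjoint matrices, the two applications (Lieb's concavity, then Jensen) essentially write themselves; there is no serious obstacle beyond checking that the Jensen step is legitimate, which it is because $f$ is a real-valued concave function of a positive-definite matrix variable and $\Expect\norm{\mtx{A}}$ may be assumed finite in the nontrivial case (otherwise the right-hand side is $+\infty$ and the inequality is vacuous).
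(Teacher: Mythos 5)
Your proof is correct and is essentially identical to the paper's own argument: both rewrite $\mtx{X}$ as $\log(\econst^{\mtx{X}})$, invoke Lieb's concavity theorem with the fixed matrix $\mtx{H}$, and then apply Jensen's inequality to move the expectation inside the logarithm. The only difference is notational (your $\mtx{A}$ versus the paper's $\mtx{Y}$).
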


\begin{proof}
Define the random matrix $\mtx{Y} = \econst^{\mtx{X}}$, and calculate that
$$
\Expect \trace \exp(\mtx{H} + \mtx{X})
	= \Expect \trace \exp( \mtx{H} + \log( \mtx{Y} ) )
	\leq \trace \exp( \mtx{H} + \log( \Expect \mtx{Y} ) )
	= \trace \exp( \mtx{H} + \log( \Expect \econst^{\mtx{X}} ) ).
$$
The first identity follows because the logarithm can be defined as the functional inverse of the matrix exponential.  Lieb's result, Theorem~\ref{thm:lieb}, establishes that the trace function is concave in $\mtx{Y}$, so we may invoke Jensen's inequality to draw the expectation inside the logarithm.
\end{proof}

A significant advantage of our point of view is that the proof extends in a transparent way to yield other types of probability inequalities for adapted sequence of random matrices.  We have dilated on this observation in a preliminary version of this work that is now available as a technical report~\cite{Tro10:User-Friendly-Martingale-TR}.  Here, for brevity, we focus on proving Freedman's inequality.

\section{Tail Bounds via Martingale Methods} \label{sec:martingale}

In this section, we show that Freedman's techniques extend to the matrix setting with minor (but profound) changes.  The key idea is to use Corollary~\ref{cor:cum-ineq} to control the evolution of a matrix version of the moment generating function.  This argument culminates in a rather general theorem on the large deviation behavior of an adapted sequence of random matrices.  In \S\ref{sec:freedman}, we specialize this result to obtain Freedman's inequality.

\subsection{Additional Terminology}

We say that a sequence $\{ \mtx{X}_k \}$ of random matrices is \emph{adapted} to the filtration when each $\mtx{X}_k$ is measurable with respect to $\coll{F}_k$.  Loosely speaking, an adapted sequence is one where the present depends only upon the past.
We say that a sequence $\{ \mtx{V}_k \}$ of random matrices is \emph{previsible} when each $\mtx{V}_k$ is measurable with respect to $\coll{F}_{k-1}$.  In particular, the sequence $\{ \Expect_{k-1} \mtx{X}_k \}$ of conditional expectations of an adapted sequence $\{ \mtx{X}_k \}$ is previsible.
A \term{stopping time} is a random variable $\kappa : \Omega \to \mathbb{N}_0 \cup \{ \infty\}$ that satisfies
$$
\{ \kappa \leq k \} \subset \coll{F}_k
\quad\text{for $k = 0, 1, 2, \dots, \infty$.}
$$
In words, we can determine if the stopping time has arrived from current and past experience.

\subsection{The Large Deviation Supermartingale} \label{sec:supermartingale}

Consider an adapted random process $\{ \mtx{X}_k : k = 1, 2, 3, \dots \}$ and a previsible random process $\{\mtx{V}_k : k = 1, 2, 3, \dots \}$ whose values are self-adjoing~matrices with dimension $d$.  Suppose that the two processes are connected through a relation of the form
\begin{equation} \label{eqn:cgf-bound}
\log \Expect_{k-1} \econst^{\theta \mtx{X}_k}
	\psdle g(\theta) \cdot \mtx{V}_{k}
\quad\text{for $\theta > 0$,}
\end{equation}
where the function $g : (0,\infty) \to [0, \infty]$.
The left-hand side should be interpreted as a conditional cumulant generating function (cgf); see~\cite[Sec.~3.1]{Tro10:User-Friendly-arxiv}.
It is convenient to introduce the partial sums of the original process and the partial sums of the conditional cgf bounds:
\begin{align*}
\mtx{Y}_0 := \mtx{0}
&\quad\text{and}\quad
\mtx{Y}_k := \sum\nolimits_{j=1}^k \mtx{X}_j. \\
\mtx{W}_{0} := \mtx{0}
&\quad\text{and}\quad
\mtx{W}_{k} := \sum\nolimits_{j=1}^{k} \mtx{V}_j.
\end{align*}
The random matrix $\mtx{W}_k$ can be viewed as a measure of the total variability of the process $\{\mtx{X}_k\}$ up to time $k$.    The partial sum $\mtx{Y}_k$ is unlikely to be large unless $\mtx{W}_k$ is also large.  





To continue, we fix the function $g$ and a positive number $\theta$.  Define a real-valued function with two self-adjoint~matrix arguments:
$$
G_{\theta}( \mtx{Y}, \mtx{W} )
	:= \trace \exp\big( \theta \mtx{Y} - g(\theta) \cdot \mtx{W} \big).
$$
We use the function $G_{\theta}$ to construct a real-valued random process.
\begin{equation} \label{eqn:Sk-general}
S_k := S_k(\theta) = G_{\theta}(\mtx{Y}_k, \mtx{W}_{k} )
\quad\text{for $k = 0, 1, 2, \dots$.}
\end{equation}
This process is an evolving measure of the discrepancy between the partial sum process $\{\mtx{Y}_k\}$ and the cumulant sum process $\{\mtx{W}_k\}$.  The following lemma describes the key properties of this random sequence.  In particular, the average discrepancy decreases with time.  


\begin{lemma} \label{lem:Sk-supermartingale}
For each fixed $\theta > 0$, the random process $\{S_k(\theta) : k = 0,1, 2, \dots \}$ defined in~\eqref{eqn:Sk-general} is a positive supermartingale 
whose initial value $S_0 = d$.
\end{lemma}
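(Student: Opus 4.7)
The plan is to verify the three claims in order: initial value, positivity, and the supermartingale property; only the last requires any real work.

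First I would check that $S_0 = d$ by direct substitution. Since $\mtx{Y}_0 = \mtx{0}$ and $\mtx{W}_0 = \mtx{0}$ by definition, we have $S_0 = \trace \exp(\mtx{0}) = \trace(\Id) = d$. Positivity of $S_k$ follows because $\theta \mtx{Y}_k - g(\theta)\mtx{W}_k$ is self-adjoint (as a real linear combination of self-adjoint matrices and sums thereof), so its matrix exponential is positive definite, and hence has positive trace.

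The heart of the argument is showing $\Expect_{k-1} S_k \leq S_{k-1}$. I would split off the contribution of the $k$th step by writing
\[
\theta \mtx{Y}_k - g(\theta) \mtx{W}_k \;=\; \underbrace{\bigl(\theta \mtx{Y}_{k-1} - g(\theta) \mtx{W}_{k-1} - g(\theta) \mtx{V}_k\bigr)}_{\displaystyle =: \mtx{H}_{k-1}} \;+\; \theta \mtx{X}_k.
\]
The matrix $\mtx{H}_{k-1}$ is $\coll{F}_{k-1}$-measurable because $\mtx{Y}_{k-1}$ and $\mtx{W}_{k-1}$ are (as they are sums of $\coll{F}_{k-1}$-measurable quantities) and $\mtx{V}_k$ is previsible by hypothesis. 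Conditioning on $\coll{F}_{k-1}$ therefore treats $\mtx{H}_{k-1}$ as deterministic, so I can apply Corollary~\ref{cor:cum-ineq} (conditional on $\coll{F}_{k-1}$) with the fixed matrix $\mtx{H}_{k-1}$ and the random matrix $\theta \mtx{X}_k$:
\[
\Expect_{k-1} S_k \;=\; \Expect_{k-1} \trace \exp\bigl(\mtx{H}_{k-1} + \theta \mtx{X}_k\bigr) \;\leq\; \trace \exp\bigl( \mtx{H}_{k-1} + \log \Expect_{k-1} \econst^{\theta \mtx{X}_k} \bigr).
\]

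To finish, I would invoke the conditional cgf hypothesis~\eqref{eqn:cgf-bound}, which reads $\log \Expect_{k-1} \econst^{\theta \mtx{X}_k} \psdle g(\theta) \mtx{V}_k$, together with the fact that $\mtx{A} \mapsto \trace \exp(\mtx{C} + \mtx{A})$ is monotone in the semidefinite order (this is the standard monotonicity of the trace exponential, since $\exp$ is operator monotone on the spectrum after translation, and the trace of a monotone function of a self-adjoint matrix respects the Loewner order). This yields
\[
\trace \exp\bigl(\mtx{H}_{k-1} + \log \Expect_{k-1} \econst^{\theta \mtx{X}_k}\bigr) \;\leq\; \trace \exp\bigl(\mtx{H}_{k-1} + g(\theta) \mtx{V}_k\bigr) \;=\; \trace \exp\bigl(\theta \mtx{Y}_{k-1} - g(\theta) \mtx{W}_{k-1}\bigr) \;=\; S_{k-1},
\]
after the two $g(\theta) \mtx{V}_k$ terms cancel. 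The main subtlety — and the only step that isn't a one-line identity — is justifying the conditional use of Corollary~\ref{cor:cum-ineq}: one must argue that $\mtx{H}_{k-1}$ can be frozen under $\Expect_{k-1}$, which is exactly why the previsibility hypothesis on $\{\mtx{V}_k\}$ was imposed in the setup.
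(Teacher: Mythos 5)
Your proof is correct and follows essentially the same route as the paper: you split off the $k$th step exactly as the paper does (your $\mtx{H}_{k-1}$ is just $\theta\mtx{Y}_{k-1} - g(\theta)\mtx{W}_k$ written out), apply Corollary~\ref{cor:cum-ineq} conditionally on $\coll{F}_{k-1}$, and then use the cgf hypothesis together with monotonicity of the trace exponential. The one place you add useful detail is in spelling out why previsibility of $\{\mtx{V}_k\}$ is what licenses freezing $\mtx{H}_{k-1}$ under $\Expect_{k-1}$, which the paper states more tersely.
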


\begin{proof}
It is easily seen that $S_k$ is positive because the exponential of a self-adjoint~matrix is positive definite, and the trace of a positive-definite matrix is positive.  We obtain the initial value from a short calculation:
$$
S_0 = \trace\exp\left(\theta \mtx{Y}_0 - g(\theta) \cdot \mtx{W}_{0} \right)
	= \trace \exp( \mtx{0} )
	= \trace \Id
	= d.
$$
To prove that the process is a supermartingale, we ascend a short chain of inequalities.
\begin{align*}
\Expect_{k-1} S_k
	&= \Expect_{k-1} \trace \exp\left(\theta \mtx{Y}_{k-1} - g(\theta) \cdot \mtx{W}_{k} + \theta \mtx{X}_k \right) \\
	&\leq \trace \exp\left(\theta \mtx{Y}_{k-1} - g(\theta) \cdot \mtx{W}_{k} + \log \Expect_{k-1} \econst^{\theta\mtx{X}_k} \right) \\
	&\leq \trace\exp\left(\theta \mtx{Y}_{k-1} - g(\theta) \cdot \mtx{W}_{k}		+ g(\theta) \cdot \mtx{V}_{k} \right) \\
	&= \trace\exp\left(\theta \mtx{Y}_{k-1} - g(\theta) \cdot \mtx{W}_{k-1} \right) \\
	&= S_{k-1}.	
\end{align*}
In the second line, we invoke Corollary~\ref{cor:cum-ineq}, conditional on $\coll{F}_{k-1}$.  This act is legal because $\mtx{Y}_{k-1}$ and $\mtx{W}_{k}$ are both measurable with respect to $\coll{F}_{k-1}$.  The next inequality depends on the assumption~\eqref{eqn:cgf-bound} together with the fact that the trace exponential is monotone with respect to the semidefinite order~\cite[\S2.2]{Pet94:Survey-Certain}.  The last step follows because $\{ \mtx{W}_{k} \}$ is the sequence of partial sums of $\{\mtx{V}_k\}$.
\end{proof}

%
%

Finally, we present a simple inequality for the function $G_{\theta}$ that holds when we have control on the eigenvalues of its arguments.


\begin{lemma} \label{lem:G-monotone}
Suppose that $\lambda_{\max}(\mtx{Y}) \geq t$ and that $\lambda_{\max}(\mtx{W}) \leq w$.  For each $\theta > 0$,
$$
G_{\theta}( \mtx{Y}, \mtx{W} )
	\geq \econst^{\theta t - g(\theta) \cdot w}.
$$
\end{lemma}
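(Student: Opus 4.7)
The plan is to reduce the trace of a matrix exponential to a scalar inequality by passing through the maximum eigenvalue. First I would use the elementary observation that, since $\exp$ of a self-adjoint matrix is positive-definite, the trace dominates the largest eigenvalue. Thus
$$
G_{\theta}(\mtx{Y}, \mtx{W}) = \trace \exp(\theta \mtx{Y} - g(\theta) \mtx{W}) \geq \lambda_{\max}\bigl(\exp(\theta \mtx{Y} - g(\theta) \mtx{W})\bigr) = \exp\bigl(\lambda_{\max}(\theta \mtx{Y} - g(\theta) \mtx{W})\bigr),
$$
where the last identity follows from the spectral mapping theorem and the monotonicity of the scalar exponential.

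Next I would lower bound $\lambda_{\max}(\theta \mtx{Y} - g(\theta) \mtx{W})$. Because $g(\theta) \geq 0$ and the hypothesis $\lambda_{\max}(\mtx{W}) \leq w$ means $\mtx{W} \preccurlyeq w \Id$, we obtain the semidefinite inequality $-g(\theta) \mtx{W} \succcurlyeq -g(\theta) \, w \, \Id$. Adding $\theta \mtx{Y}$ to both sides and invoking the monotonicity of $\lambda_{\max}$ under the semidefinite order gives
$$
\lambda_{\max}(\theta \mtx{Y} - g(\theta) \mtx{W}) \geq \lambda_{\max}(\theta \mtx{Y} - g(\theta) \, w \, \Id) = \theta \, \lambda_{\max}(\mtx{Y}) - g(\theta) \, w \geq \theta t - g(\theta) \, w,
$$
where the equality uses that subtracting a scalar multiple of the identity simply shifts each eigenvalue, and the final inequality uses the hypothesis $\lambda_{\max}(\mtx{Y}) \geq t$ together with $\theta > 0$.

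Combining the two displays yields $G_{\theta}(\mtx{Y}, \mtx{W}) \geq \exp(\theta t - g(\theta) \, w)$, as required. I do not anticipate any real obstacle here: the argument is a short chain of elementary facts about the semidefinite order, the functional calculus, and the trace. The only subtlety worth flagging is the nonnegativity of $g(\theta)$, which is needed to flip the sense of the semidefinite inequality when multiplying $\mtx{W} \preccurlyeq w \Id$ by $-g(\theta)$; this is guaranteed by the declared codomain $[0,\infty]$ of $g$.
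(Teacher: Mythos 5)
Your argument is correct and amounts to the same short chain of elementary facts as the paper's proof, just traversed in a different order: the paper first applies the Loewner bound $\mtx{W} \psdle w\Id$ at the level of the trace exponential (invoking monotonicity of $\mtx{A} \mapsto \trace \econst^{\mtx{A}}$ in the semidefinite order), then passes to $\lambda_{\max}$ and uses spectral mapping, whereas you first pass to $\lambda_{\max}$ and use spectral mapping, and only then apply the Loewner bound inside the exponent via Weyl's monotonicity of $\lambda_{\max}$. The net effect is that you replace the trace-exponential monotonicity lemma with the slightly more elementary Weyl eigenvalue monotonicity; both versions correctly isolate the role of $g(\theta) \geq 0$ in reversing the order when multiplying by $-g(\theta)$, so either is a complete proof.
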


\begin{proof}
Recall that $g(\theta) \geq 0$.  The bound results from a straightforward calculation:
$$
G_{\theta}(\mtx{Y}, \mtx{W})
	= \trace \econst^{ \theta \mtx{Y} - g(\theta) \cdot \mtx{W} }
	\geq \trace \econst^{\theta\mtx{Y} - g(\theta) \cdot w \Id }
	\geq \lambda_{\max}\left( \econst^{ \theta \mtx{Y} - g(\theta) \cdot w \Id }\right)
	= \econst^{ \theta \lambda_{\max}( \mtx{Y} ) - g(\theta) \cdot w }
	\geq \econst^{ \theta t - g(\theta) \cdot w}.
$$
The first inequality depends on the semidefinite relation $\mtx{W} \psdle w \Id$ and the monotonicity of the trace exponential with respect to the semidefinite order~\cite[\S2.2]{Pet94:Survey-Certain}.  The second inequality relies on the fact that the trace of a psd matrix is at least as large as its maximum eigenvalue.  The third identity follows from the spectral mapping theorem and elementary properties of the maximum eigenvalue map.
\end{proof}


%

\subsection{A Tail Bound for Adapted Sequences}

Our key theorem for adapted sequences provides a bound on the probability that the partial sum of a matrix-valued random process is large.  In the next section, we apply this result to establish a stronger version of Theorem~\ref{thm:matrix-freedman}.  This result also allows us to develop other types of probability inequalities for adapted sequences of random matrices; see the technical report~\cite{Tro10:User-Friendly-Martingale-TR} for additional details.


\begin{thm}[Master Tail Bound for Adapted Sequences] \label{thm:main-result-adapted}
Consider an adapted sequence $\{ \mtx{X}_k \}$ and a previsible sequence $\{ \mtx{V}_k \}$ of self-adjoint matrices with dimension $d$.  Assume these sequences satisfy the relations
\begin{equation} \label{eqn:cgf-hyp-adapted}
\log \Expect_{k-1} \econst^{\theta \mtx{X}_k}
	\psdle g(\theta) \cdot \mtx{V}_{k}
\quad \text{almost surely for each $\theta > 0$},
\end{equation}
where the function $g : (0, \infty) \to [0, \infty]$.  In particular, the hypothesis~\eqref{eqn:cgf-hyp-adapted} holds when
\begin{equation} \label{eqn:mgf-hyp-adapted}
\Expect_{k-1} \econst^{\theta \mtx{X}_k}
	\psdle \econst^{g(\theta) \cdot \mtx{V}_{k}}
	\quad\text{almost surely for each $\theta > 0$.}
\end{equation}
Define the partial sum processes
$$
\mtx{Y}_k := \sum\nolimits_{j=1}^k \mtx{X}_j
\quad\text{and}\quad
\mtx{W}_{k} := \sum\nolimits_{j=1}^{k} \mtx{V}_j.
$$
Then, for all $t, w \in \mathbb{R}$,
$$
\Prob{ \exists k \geq 0 : \lambda_{\max}( \mtx{Y}_k ) \geq t \ \text{ and }\
\lambda_{\max}( \mtx{W}_{k} ) \leq w }
	\leq d \cdot \inf_{\theta > 0} \econst^{-\theta t + g(\theta) \cdot w }.
$$
\end{thm}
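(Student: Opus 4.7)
The plan is to run a stopping-time argument driven by the supermartingale constructed in Lemma~\ref{lem:Sk-supermartingale}. First I would dispose of the secondary clause: the stronger hypothesis~\eqref{eqn:mgf-hyp-adapted} implies~\eqref{eqn:cgf-hyp-adapted} because the matrix logarithm is operator monotone on the positive-definite cone, so taking $\log$ of both sides of~\eqref{eqn:mgf-hyp-adapted} and using $\log \econst^{g(\theta) \cdot \mtx{V}_k} = g(\theta) \cdot \mtx{V}_k$ gives~\eqref{eqn:cgf-hyp-adapted}. The rest of the argument uses only~\eqref{eqn:cgf-hyp-adapted}.

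Fix $\theta > 0$ and form the positive supermartingale $\{ S_k(\theta) \}$ of Lemma~\ref{lem:Sk-supermartingale}, noting $S_0 = d$. Introduce the stopping time
$$
\kappa := \inf\bigl\{ k \geq 0 : \lambda_{\max}(\mtx{Y}_k) \geq t \ \text{ and }\ \lambda_{\max}(\mtx{W}_{k}) \leq w \bigr\},
$$
with the convention that $\kappa = \infty$ when no such $k$ exists. This is a bona fide stopping time because $\mtx{Y}_k$ is $\coll{F}_k$-measurable (it is the partial sum of the adapted sequence) and $\mtx{W}_k$ is $\coll{F}_{k-1}$-measurable (it is the partial sum of the previsible sequence). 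The event inside the probability to be bounded coincides exactly with $\{ \kappa < \infty \}$.

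For each $n \in \mathbb{N}$, the truncated stopping time $\kappa \wedge n$ is bounded, so the optional-stopping theorem applied to the positive supermartingale gives $\Expect S_{\kappa \wedge n}(\theta) \leq S_0 = d$. On the event $\{ \kappa \leq n \}$ we have $\kappa \wedge n = \kappa$, and the random matrices $\mtx{Y}_\kappa$ and $\mtx{W}_\kappa$ meet the hypotheses of Lemma~\ref{lem:G-monotone}, hence $S_\kappa(\theta) \geq \econst^{\theta t - g(\theta) \cdot w}$. Combined with the positivity of $S_k$, this yields
$$
d \geq \Expect S_{\kappa \wedge n}(\theta) \geq \econst^{\theta t - g(\theta) \cdot w} \cdot \Prob{ \kappa \leq n }.
$$
Letting $n \to \infty$ by monotone convergence gives $\Prob{ \kappa < \infty } \leq d \cdot \econst^{-\theta t + g(\theta) \cdot w}$. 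Since $\theta > 0$ was arbitrary, taking the infimum delivers the stated inequality.

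The only genuinely delicate point is upgrading from a bounded stopping time to the unbounded case, and the truncation-and-limit maneuver handles this cleanly. All the matrix-analytic heavy lifting (Lieb concavity via Corollary~\ref{cor:cum-ineq}, and monotonicity of the trace exponential underlying Lemmas~\ref{lem:Sk-supermartingale} and~\ref{lem:G-monotone}) has already been discharged, so no further work of that kind is required.
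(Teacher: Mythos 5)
Your proof is correct and follows essentially the same route as the paper: operator monotonicity of the logarithm to reduce~\eqref{eqn:mgf-hyp-adapted} to~\eqref{eqn:cgf-hyp-adapted}, the supermartingale $\{S_k(\theta)\}$ from Lemma~\ref{lem:Sk-supermartingale}, the same stopping time $\kappa$, and the lower bound $S_\kappa \geq \econst^{\theta t - g(\theta)w}$ from Lemma~\ref{lem:G-monotone}. The one place you are slightly more careful than the paper is the optional-stopping step: you truncate to $\kappa \wedge n$, invoke optional stopping for the bounded stopping time, drop the nonnegative contribution on $\{\kappa > n\}$, and pass to the limit; the paper instead writes a chain of (in)equalities starting from $d \geq \sum_k \Expect[S_\kappa \mid \kappa = k]\Prob{\kappa=k}$ that implicitly leans on the same optional-stopping fact for nonnegative supermartingales but presents it more loosely. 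Your version makes the measure-theoretic bookkeeping explicit, which is a small improvement in rigor but not a different argument.
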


\begin{proof}
To begin, note that the cgf hypothesis~\eqref{eqn:cgf-hyp-adapted} holds in the presence of~\eqref{eqn:mgf-hyp-adapted} because the logarithm is an operator monotone function~\cite[Ch.~V]{Bha97:Matrix-Analysis}.

The overall proof strategy is identical with the stopping-time technique used by Freedman~\cite{Fre75:Tail-Probabilities}.  
Fix a positive parameter $\theta$, which we will optimize later.  Following the discussion in~\S\ref{sec:supermartingale}, we introduce the random process $S_k := G_{\theta}(\mtx{Y}_k, \mtx{W}_{k})$.  Lemma~\ref{lem:Sk-supermartingale} implies that $\{ S_k \}$ is a positive supermartingale with initial value $d$.  These simple properties of the auxiliary random process distill all the essential information from the hypotheses of the theorem.

Define a stopping time $\kappa$ by finding the first time instant $k$ when the maximum eigenvalue of the partial sum process reaches the level $t$ even though the sum of cgf bounds has maximum eigenvalue no larger than $w$.
$$
\kappa := \inf\{ k \geq 0 : \lambda_{\max}( \mtx{Y}_k ) \geq t 
	\ \text{ and }\ \lambda_{\max}(\mtx{W}_{k}) \leq w\}.
$$
When the infimum is empty, the stopping time $\kappa = \infty$.  Consider a system of exceptional events:
$$
E_k := \{ \lambda_{\max}(\mtx{Y}_k) \geq t \ \text{ and }\ 
	\lambda_{\max}(\mtx{W}_{k}) \leq w \}
	\quad\text{for $k = 0, 1, 2, \dots$.}
$$
Construct the event $E := \bigcup\nolimits_{k=0}^\infty E_k$ that one or more of these exceptional situations takes place.  The intuition behind this definition is that the partial sum $\mtx{Y}_k$ is typically not large unless the process $\{\mtx{X}_k\}$ has varied substantially, a situation that the bound on $\mtx{W}_k$ disallows.  As a result, the event $E$ is rather unlikely.




We are prepared to estimate the probability of the exceptional event.  First, note that $\kappa < \infty$ on the event $E$.  Therefore, Lemma~\ref{lem:G-monotone} provides a conditional lower bound for the process $\{S_k\}$ at the stopping time $\kappa$:
$$
S_{\kappa} = G_{\theta}(\mtx{Y}_{\kappa}, \mtx{W}_{\kappa})
	\geq \econst^{ \theta t - g(\theta) \cdot w }
\quad\text{on the event $E$.}
$$
Since $\Expect S_k \leq d$ for each (finite) index $k$,
\begin{multline*}
d \geq\sum\nolimits_{k=1}^\infty \Expect[ S_\kappa \, | \, \kappa = k ]
		\cdot \Prob{ \kappa = k }
	= \Expect[ S_{\kappa} \, | \, \kappa < \infty ]
	\geq \int_{\{ \kappa < \infty\}} S_{\kappa} \idiff{\mathbb{P}} \\
	\geq \int_{E} S_{\kappa} \idiff{\mathbb{P}}
	\geq \Probe{E} \cdot \inf\nolimits_{E} S_\kappa
	\geq \Probe{E} \cdot \econst^{\theta t - g(\theta)\cdot w}.
\end{multline*}
We require the fact that $S_{\kappa}$ is positive to justify these inequalities.  Rearrange the relation to obtain
$$
\Probe{E} \leq d \cdot \econst^{-\theta t + g(\theta)\cdot w}.
$$
Minimize the right-hand side with respect to $\theta$ to complete the main part of the argument.
\end{proof}

\section{Proof of Freedman's Inequality} \label{sec:freedman}

In this section, we use the general martingale deviation bound, Theorem~\ref{thm:main-result-adapted}, to prove a stronger version of Theorem~\ref{thm:matrix-freedman}.

\begin{thm} \label{thm:adapted-bennett}
Consider an adapted sequence $\{ \mtx{X}_k \}$ of self-adjoint matrices with dimension $d$ that satisfy the relations
$$
\Expect_{k-1} \mtx{X}_k = \mtx{0}
\quad\text{and}\quad
\lambda_{\max}( \mtx{X}_k ) \leq R
\quad\text{almost surely}
\quad\text{for $k = 1, 2, 3, \dots$.}
$$
Define the partial sums
$$
\mtx{Y}_k := \sum\nolimits_{j=1}^k \mtx{X}_j
\quad\text{and}\quad
\mtx{W}_k := \sum\nolimits_{j=1}^k \Expect_{j-1} \big(\mtx{X}_j^2\big)
\quad\text{for $k = 0, 1, 2, \dots$}.
$$
Then, for all $t \geq 0$ and $\sigma^2 > 0$,
$$
\Prob{ \exists k \geq 0 : \lambda_{\max}(\mtx{Y}_k) \geq t \ \text{ and }\ 
	\norm{\mtx{W}_k} \leq \sigma^2 }
	\leq d \cdot \exp \left\{ - \frac{\sigma^2}{R^2} \cdot h\!\left( \frac{Rt}{\sigma^2} \right) \right\}.
$$
The function $h(u) := (1+u)\log(1+u) - u$ for $u \geq 0$.
\end{thm}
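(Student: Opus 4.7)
The plan is to derive this Bennett-type inequality as a direct consequence of the Master Tail Bound (Theorem~\ref{thm:main-result-adapted}). Since the hypotheses already supply a zero-mean adapted sequence bounded above by $R$, and the partial sums $\mtx{W}_k$ match the form required by the master bound with $\mtx{V}_k = \Expect_{k-1}(\mtx{X}_k^2)$, the entire task reduces to (i) producing an explicit function $g(\theta)$ for which the matrix cgf bound $\log \Expect_{k-1} \econst^{\theta\mtx{X}_k} \psdle g(\theta)\cdot\mtx{V}_k$ holds, and (ii) optimizing the resulting tail estimate over $\theta > 0$.

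For step (i), I would establish the matrix Bennett cgf bound by a transfer argument. The scalar function
$$
f(x) := \frac{\econst^{\theta x} - \theta x - 1}{x^2}
$$
(extended by continuity at $0$) is nondecreasing on $\Rspace{}$ for each $\theta > 0$, so whenever $x \leq R$ one has $\econst^{\theta x} \leq 1 + \theta x + f(R)\, x^2$. The spectral mapping theorem lifts this scalar inequality to the semidefinite order: since $\lambda_{\max}(\mtx{X}_k) \leq R$ almost surely,
$$
\econst^{\theta \mtx{X}_k} \psdle \Id + \theta \mtx{X}_k + f(R) \cdot \mtx{X}_k^2
\quad\text{almost surely.}
$$
Taking conditional expectation and using $\Expect_{k-1} \mtx{X}_k = \mtx{0}$ gives
$$
\Expect_{k-1}\econst^{\theta\mtx{X}_k} \psdle \Id + f(R)\cdot \Expect_{k-1}(\mtx{X}_k^2) \psdle \exp\bigl(f(R)\cdot \Expect_{k-1}(\mtx{X}_k^2)\bigr),
$$
where the last step is the elementary psd inequality $\Id + \mtx{A} \psdle \econst^{\mtx{A}}$ for $\mtx{A} \psdge \mtx{0}$. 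This verifies hypothesis~\eqref{eqn:mgf-hyp-adapted} with $g(\theta) = (\econst^{\theta R} - \theta R - 1)/R^2$ and $\mtx{V}_k = \Expect_{k-1}(\mtx{X}_k^2)$.

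For step (ii), Theorem~\ref{thm:main-result-adapted} applied with $w = \sigma^2$ delivers
$$
\Prob{\exists k \geq 0 : \lambda_{\max}(\mtx{Y}_k) \geq t \text{ and } \norm{\mtx{W}_k} \leq \sigma^2}
\leq d \cdot \inf_{\theta>0} \exp\!\left(-\theta t + \frac{\econst^{\theta R} - \theta R - 1}{R^2}\,\sigma^2\right),
$$
after noting that $\norm{\mtx{W}_k} = \lambda_{\max}(\mtx{W}_k)$ since $\mtx{W}_k$ is positive semidefinite. Differentiating in $\theta$ identifies the optimizer $\theta_\star = R^{-1}\log(1 + Rt/\sigma^2)$, and plugging back in with the substitution $u = Rt/\sigma^2$ collapses the exponent to $-(\sigma^2/R^2)\bigl[(1+u)\log(1+u) - u\bigr] = -(\sigma^2/R^2)\,h(Rt/\sigma^2)$, which is the claimed bound.

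The only nontrivial step is the matrix cgf estimate, and there the main subtlety is the passage from the scalar inequality $\econst^{\theta x} \leq 1 + \theta x + f(R)x^2$ (valid only on the half-line $x \leq R$) to its operator counterpart; this is precisely where the one-sided spectral control $\lambda_{\max}(\mtx{X}_k) \leq R$, rather than a two-sided norm bound, is exactly what is needed. Everything after that is essentially the same calculation as in the scalar Bennett inequality, lifted through Theorem~\ref{thm:main-result-adapted}.
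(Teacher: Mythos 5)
Your proof is correct and follows the same route as the paper: apply the Master Tail Bound (Theorem~\ref{thm:main-result-adapted}) with $\mtx{V}_k = \Expect_{k-1}(\mtx{X}_k^2)$ and the Bennett-type function $g(\theta) = (\econst^{\theta R} - \theta R - 1)/R^2$, then optimize over $\theta$. The only difference is presentational: the paper rescales to $R=1$ and cites Lemma~\ref{lem:bernstein-bdd-mgf} for the conditional mgf bound, whereas you keep $R$ general and re-derive that lemma inline via the standard transfer argument (monotonicity of $x \mapsto (\econst^{\theta x}-\theta x-1)/x^2$, spectral mapping, $\Id + \mtx{A} \psdle \econst^{\mtx{A}}$), which is exactly the proof of the cited lemma.
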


Theorem~\ref{thm:matrix-freedman} follows easily from this result.

\begin{proof}[Proof of Theorem~\ref{thm:matrix-freedman} from Theorem~\ref{thm:adapted-bennett}]
To derive Theorem~\ref{thm:matrix-freedman}, we note that the difference sequence of $\{\mtx{X}_k\}$ a matrix martingale $\{\mtx{Y}_k\}$ satisfies the conditions of Theorem~\ref{thm:adapted-bennett} and the martingale can be expressed using partial sums of the difference sequence.  Finally, we apply the numerical inequality
$$
h(u) \geq \frac{u^2/2}{1 + u/3}
\quad\text{for $u \geq 0$},
$$
which we obtain by comparing derivatives.
\end{proof}

\subsection{Demonstration of Theorem~\ref{thm:adapted-bennett}}

We conclude with the proof of Theorem~\ref{thm:adapted-bennett}.  The argument depends on the following estimate for the moment generating function of a zero-mean random matrix whose eigenvalues are uniformly bounded.  See~\cite[Lem.~6.7]{Tro10:User-Friendly-arxiv} for the proof.

\begin{lemma}[Freedman mgf] \label{lem:bernstein-bdd-mgf}
Suppose that $\mtx{X}$ is a random self-adjoint matrix that satisfies
$$
\Expect \mtx{X} = \mtx{0}
\quad\text{and}\quad
\lambda_{\max}(\mtx{X}) \leq 1.
$$
Then
$$
\Expect \econst^{\theta \mtx{X}}
	\psdle \exp\left( (\econst^{\theta} - \theta - 1)
	\cdot \Expect(\mtx{X}^2) \right)
\quad\text{for $\theta > 0$.}
$$
\end{lemma}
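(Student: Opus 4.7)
My plan is to prove the matrix mgf bound by lifting a one-variable inequality to matrices via the spectral theorem, then invoking $\mathbf{I}+\mtx{A} \psdle \econst^{\mtx{A}}$ to package the result in the desired exponential form.

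First, I would establish the scalar inequality
$$
\econst^{\theta x} \;\leq\; 1 + \theta x + (\econst^{\theta} - \theta - 1)\,x^2
\quad\text{for all } x \leq 1.
$$
The clean way to do this is to set $f(x) = (\econst^{\theta x} - \theta x - 1)/x^2$ (with $f(0)=\theta^2/2$ by continuation) and verify that $f$ is monotonically increasing on all of $\mathbb{R}$ for $\theta>0$. Differentiating twice, the sign of $f'$ reduces to the sign of $\theta x\,\econst^{\theta x} + \theta x - 2\econst^{\theta x} + 2$, whose second derivative is $\theta^3 x\,\econst^{\theta x}$. A short monotonicity argument based at $x=0$ shows this quantity is positive for $x\neq 0$, giving $f'(x) > 0$ off the origin. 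Therefore $f(x) \leq f(1) = \econst^{\theta} - \theta - 1$ for every $x \leq 1$, which is the claim. I expect this scalar computation to be the most tedious step, since I have to handle $x<0$ (where the matrix $\mtx{X}$ may have eigenvalues) as well as $x \in (0,1]$.

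Next, I would transfer the scalar inequality to a semidefinite inequality. Since $\lambda_{\max}(\mtx{X}) \leq 1$, every eigenvalue of $\mtx{X}$ lies in $(-\infty,1]$, so the spectral calculus gives
$$
\econst^{\theta \mtx{X}} \;\psdle\; \mathbf{I} + \theta\,\mtx{X} + (\econst^\theta - \theta - 1)\,\mtx{X}^2.
$$
This is the standard ``transfer rule'': for self-adjoint $\mtx{X}$ and continuous functions $\phi\leq\psi$ on the spectrum, $\phi(\mtx{X}) \psdle \psi(\mtx{X})$.

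Finally, I would take expectations (linearity preserves the semidefinite order) and use the hypothesis $\Expect\mtx{X}=\mtx{0}$ to eliminate the linear term, yielding
$$
\Expect\,\econst^{\theta \mtx{X}} \;\psdle\; \mathbf{I} + (\econst^\theta - \theta - 1)\,\Expect(\mtx{X}^2).
$$
To finish, I apply the elementary operator inequality $\mathbf{I} + \mtx{A} \psdle \econst^{\mtx{A}}$, valid for every self-adjoint $\mtx{A}$ since $1 + a \leq \econst^a$ on $\mathbb{R}$; here $\mtx{A} = (\econst^\theta - \theta - 1)\,\Expect(\mtx{X}^2)$ is positive semidefinite because $\Expect(\mtx{X}^2) \psdge \mtx{0}$ and the scalar coefficient is nonnegative for $\theta>0$. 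This delivers
$$
\Expect\,\econst^{\theta \mtx{X}} \;\psdle\; \exp\!\bigl((\econst^\theta - \theta - 1)\,\Expect(\mtx{X}^2)\bigr),
$$
which is exactly the stated bound.
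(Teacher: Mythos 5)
Your argument is correct and is essentially the same proof the paper relies on: the paper defers this lemma to \cite[Lem.~6.7]{Tro10:User-Friendly-arxiv}, which argues exactly as you do---monotonicity of $x \mapsto (\econst^{\theta x}-\theta x-1)/x^{2}$ gives the scalar bound $\econst^{\theta x} \leq 1+\theta x+(\econst^{\theta}-\theta-1)x^{2}$ on $(-\infty,1]$, the transfer rule lifts it to $\mtx{X}$, taking expectations kills the linear term, and $\Id+\mtx{A}\psdle\econst^{\mtx{A}}$ finishes. One bookkeeping remark on your scalar step: the sign of $f'(x)$ is that of $x^{-3}\bigl(\theta x\econst^{\theta x}+\theta x-2\econst^{\theta x}+2\bigr)$, and the bracketed quantity is in fact negative for $x<0$ (it vanishes to second order at $0$ and has the sign of $x$); since $x^{-3}$ is also negative there, you still obtain $f'>0$ off the origin, so the conclusion is unaffected.
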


The main result follows quickly from this lemma.

\begin{proof}[Proof of Theorem~\ref{thm:adapted-bennett}]
We assume that $R = 1$; the general result follows by re-scaling since $\mtx{Y}_k$ is 1-homogeneous and $\mtx{W}_k$ is 2-homogeneous.  Invoke Lemma~\ref{lem:bernstein-bdd-mgf} conditionally to see that
$$
\Expect_{k-1} \econst^{\theta \mtx{X}_k}
	\psdle \exp\left( g(\theta) \cdot \Expect_{k-1} \big(\mtx{X}_k^2 \big) \right)
\quad\text{where $g(\theta) := \econst^{\theta} - \theta - 1$}.
$$
Theorem~\ref{thm:main-result-adapted} now implies that
$$
\Prob{ \exists k \geq 0 : \lambda_{\max}(\mtx{Y}_k) \geq t \ \text{ and }\ 
	\lambda_{\max}(\mtx{W}_k) \leq \sigma^2 }
	\leq d \cdot \inf_{\theta > 0} \econst^{-\theta t + g(\theta) \cdot \sigma^2}.
$$
The infimum is achieved when $\theta = \log(1 + t/\sigma^2)$.  Finally, note that the norm of a positive-semidefinite matrix, such as $\mtx{W}_k$, equals its largest eigenvalue.
\end{proof}


\section*{Acknowledgments}

Roberto Oliveira introduced me to Freedman's inequality and encouraged me to apply the methods from~\cite{Tro10:User-Friendly-arxiv} to study the matrix extension of Freedman's result.  I would also like to thank Yao-Liang Yu, who pointed out an inconsistency in the proof of Theorem~\ref{thm:main-result-adapted} and who proposed the argument in Lemma~\ref{lem:bernstein-bdd-mgf}.  Richard Chen and Alex Gittens have helped me root out (numerous) typographic errors.

\bibliographystyle{alpha}
\bibliography{user-friendly}

\end{document}